\theoremstyle{definition}
\theoremstyle{proposition}
\newtheorem{thm}{Theorem}
\newtheorem{lem}{Lemma}
\begin{document}

\title[Generalized Kronecker formula]{Generalized Kronecker formula for Bernoulli numbers and self-intersections of curves on a surface}

\author{Shinji Fukuhara}
\address{Department of Mathematics, Tsuda College,
2-1-1 Tsuda-machi, Kodaira-shi Tokyo 187-8577, Japan}
\email{fukuhara@tsuda.ac.jp}

\author{Nariya Kawazumi}
\address{Department of Mathematical Sciences, University of Tokyo,
3-8-1 Komaba, Meguro-ku Tokyo 153-8914, Japan}
\email{kawazumi@ms.u-tokyo.ac.jp}

\author{Yusuke Kuno}
\address{Department of Mathematics, Tsuda College,
2-1-1 Tsuda-machi, Kodaira-shi Tokyo 187-8577, Japan}
\email{kunotti@tsuda.ac.jp}

\subjclass[2010]{Primary 11B68, 57N05, 57M99;}
\keywords{Bernoulli numbers, surfaces, self-intersections}
\thanks{The authors would like to thank Professor Noriko Yui for useful comments to an earlier version of this paper.
S.\ F.\ is supported by JSPS KAKENHI (No.26400098).
N.\ K.\  is partially supported by JSPS KAKENHI (No.24224002), (No.24340010) and (No.15H03617).
Y.\ K.\  is supported by JSPS KAKENHI (No.26800044).} 

\date{\today}

\maketitle

\begin{abstract}
We present a new explicit formula for the $m$-th Bernoulli number $B_m$, which involves two integer parameters $a$ and $n$ with $0\le a\le m\le n$.
If we set $a=0$ and $n=m$, then the formula reduces to the celebrated Kronecker formula for $B_m$.
We give two proofs of our formula.
One is analytic and uses a certain function in two variables.
The other is algebraic and is motivated by a topological consideration of self-intersections of curves on an oriented surface.
\end{abstract}

\section{Introduction}

The Bernoulli numbers $B_m\ (m\ge 0)$ are defined by the generating function
\begin{equation*}
  \frac{x}{e^x-1}=\sum_{m=0}^{\infty} \frac{B_m}{m!} x^m.
\end{equation*}

We have: $B_0=1, B_1=-1/2, B_2=1/6, B_4=-1/30,\ldots$, and $B_m=0$ for all odd $m\ge 3$.
A large number of identities involving the Bernoulli numbers has been known \cite{DLS} \cite{GO1} \cite{NI1} \cite{SA1}.
Most of them give relationships between $B_m$ and $B_i\,(0\leq i<m)$. 
These identities provide various ways to compute $B_m$ recursively from the $B_i$'s for $0\le i<m$.

Contrary to the above recursive approach, the following formula of Kronecker gives a direct method for computing $B_m$.
\begin{thm}[Kronecker \cite{KR1}, see also \cite{GO1} \cite{HI1} \cite{NI1} \cite{SA1}]\label{thm1}
For any integer $m\ge 2$, it holds that
\begin{equation}
\label{kronecker}
  B_m=\sum_{k=1}^{m+1}\frac{(-1)^{k+1}}{k}\binom{m+1}{k}\sum_{i=1}^{k-1}i^m.
\end{equation}
\end{thm}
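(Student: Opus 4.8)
The plan is to reduce the right-hand side of \eqref{kronecker} to $B_m$ in two moves: first expand the inner power sum by Faulhaber's formula, and then evaluate the resulting alternating binomial sums by a finite-difference argument. Writing $P_m(k)=\sum_{i=1}^{k-1}i^m=\sum_{i=0}^{k-1}i^m$ (the $i=0$ term vanishing since $m\ge 1$), I would invoke the classical Bernoulli--Faulhaber identity
\begin{equation*}
P_m(k)=\frac{1}{m+1}\sum_{j=0}^{m}\binom{m+1}{j}B_j\,k^{m+1-j},
\end{equation*}
valid with the convention $B_1=-1/2$ coming from the generating function $x/(e^x-1)$.

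Substituting this into the right-hand side of \eqref{kronecker} and interchanging the two finite summations, I would obtain
\begin{equation*}
\sum_{k=1}^{m+1}\frac{(-1)^{k+1}}{k}\binom{m+1}{k}P_m(k)
=\frac{1}{m+1}\sum_{j=0}^{m}\binom{m+1}{j}B_j\,T_j,
\qquad
T_j:=\sum_{k=1}^{m+1}(-1)^{k+1}\binom{m+1}{k}k^{m-j},
\end{equation*}
where the factor $k^{m+1-j}/k=k^{m-j}$ has absorbed the $1/k$. The whole problem thus reduces to computing the coefficients $T_j$.

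The key step is to recognize each $T_j$ as, up to sign, an $(m+1)$-st finite difference of the monomial $x^{m-j}$ at the origin. Using the standard identity $\sum_{k=0}^{n}(-1)^{k}\binom{n}{k}k^{p}=(-1)^{n}n!\,S(p,n)$ with $n=m+1$ (here $S(p,n)$ denotes a Stirling number of the second kind), together with the vanishing $S(p,m+1)=0$ whenever $p\le m$, I would conclude that $\sum_{k=0}^{m+1}(-1)^{k}\binom{m+1}{k}k^{m-j}=0$ for every $j$ with $0\le j\le m$. Separating off the $k=0$ term, which equals $0^{m-j}$ and so is $0$ for $j<m$ but $1$ for $j=m$, then yields $T_j=0$ for $j<m$ and $T_m=1$.

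Finally, only the single index $j=m$ survives the outer sum, and since $\binom{m+1}{m}=m+1$ this produces
\begin{equation*}
\frac{1}{m+1}\binom{m+1}{m}B_m\,T_m=B_m,
\end{equation*}
establishing \eqref{kronecker}. I expect the main obstacle to be the bookkeeping in the third step: one must treat the $j=m$ term (where the convention $0^{0}=1$ contributes) separately from the generic $j<m$ terms, and keep careful track of the signs arising from $(-1)^{k+1}$ against the $(-1)^{n}$ in the finite-difference identity. Everything else is routine manipulation of finite sums.
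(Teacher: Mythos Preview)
Your argument is correct. The Faulhaber expansion of $P_m(k)$ is valid with the stated sign convention, the interchange of finite sums is unproblematic, and the evaluation of $T_j$ via the $(m+1)$-st finite difference of $x^{m-j}$ at $0$ is exactly right: the full sum $\sum_{k=0}^{m+1}(-1)^k\binom{m+1}{k}k^{m-j}$ vanishes because $m-j\le m<m+1$, and peeling off the $k=0$ term (with $0^0=1$ when $j=m$) gives $T_j=0$ for $j<m$ and $T_m=1$. The final step then collapses to $B_m$.

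Your route differs from the paper's. The paper does not prove the Kronecker formula in isolation; it establishes the two-parameter generalization (Theorem~\ref{thm2}) and recovers \eqref{kronecker} as the specialization $a=0$, $n=m$. Its first proof works by expanding the two-variable function $g(x,y)=\bigl(\int_x^y(e^t-1)^{n+1}\,dt\bigr)/(e^{y-x}-1)$ in two ways and comparing coefficients; the Bernoulli numbers enter through the factor $(y-x)/(e^{y-x}-1)$ rather than through Faulhaber. Interestingly, the combinatorial fact you use to evaluate $T_j$ is precisely the paper's Lemma~\ref{lem1} (the vanishing of $\sum_{k=1}^{n+1}(-1)^k\binom{n+1}{k}k^m$ for $1\le m\le n$), which the paper invokes only at the very end to strip the $\delta_{a,m}k^m$ correction from \eqref{general}. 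So your proof and the paper's share that lemma but deploy it in opposite places: for you it is the engine of the argument, for the paper it is a cleanup step. Your approach is shorter and more elementary for the classical case, while the paper's generating-function method yields the full two-parameter family \eqref{general}--\eqref{simple} at no extra cost.
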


In this article we generalize the formula (\ref{kronecker}) to a formula with two parameters:
\begin{thm}
\label{thm2}
Let $m,n,a$ be integers satisfying $\ 0\leq a\leq m\leq n$.
Then it holds that
\begin{equation}
\label{general}
  B_m=(-1)^{a}\sum_{k=1}^{n+1}\frac{(-1)^{k+1}}{k}\binom{n+1}{k}\left[\sum_{i=1}^{k-1}i^a(k-i)^{m-a}+\delta_{a,m}k^m \right].
\end{equation}
Here $\delta_{a,m}$ is the Kronecker delta.
Furthermore, if $m\ge 2$, it holds that
\begin{equation}
\label{simple}
  B_m=(-1)^{a}\sum_{k=1}^{n+1}\frac{(-1)^{k+1}}{k}\binom{n+1}{k}
\sum_{i=1}^{k-1}i^a(k-i)^{m-a}.
\end{equation}
\end{thm}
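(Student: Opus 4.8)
The plan is to give the analytic proof, realizing the right-hand side of (\ref{general}) as the value at the origin of a mixed partial derivative of an explicit two-variable function, and then exploiting the hypothesis $m\le n$ to collapse that function into the generating series of the Bernoulli numbers. Since $i^a(k-i)^{m-a}=\partial_x^a\partial_y^{m-a}e^{ix+(k-i)y}\big|_{x=y=0}$, grouping the summation indices as $i+j=k$ turns the double sum in (\ref{general}) (without the $\delta$-term) into $\partial_x^a\partial_y^{m-a}\Psi(x,y)\big|_{x=y=0}$, where
\[
\Psi(x,y):=\sum_{i,j\ge 1}\frac{(-1)^{i+j+1}}{i+j}\binom{n+1}{i+j}e^{ix+jy}.
\]
Because $\binom{n+1}{i+j}$ vanishes once $i+j>n+1$, this is a finite sum of exponentials, hence entire, so termwise differentiation needs no justification.

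Introducing the one-variable function $H(s):=\sum_{k=1}^{n+1}\frac{(-1)^{k+1}}{k}\binom{n+1}{k}e^{ks}$ and, for fixed $k=i+j$, summing the geometric progression in $i$, I would obtain the closed form
\[
\Psi(x,y)=\frac{H(x)-e^{x-y}H(y)}{e^{x-y}-1},
\]
the contributions of the $k=1$ terms cancelling. The key observation is that $H'(s)=1-(1-e^{s})^{n+1}$, and since $1-e^{s}$ vanishes to first order at $s=0$, the term $(1-e^{s})^{n+1}$ vanishes to order $n+1$; hence $H(s)=H(0)+s+R(s)$ with $R(s)=O(s^{n+2})$. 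Substituting this and simplifying, $\Psi$ reduces, up to the remainder produced by $R$, to
\[
-H(0)+\frac{x-y}{e^{x-y}-1}-y.
\]
The remainder is divisible by $x-y$ (its numerator vanishes on the line $x=y$) and, after dividing by $e^{x-y}-1$, still vanishes to order $\ge n+1$ at the origin; since $\partial_x^a\partial_y^{m-a}$ has total order $m\le n$, it annihilates this remainder. For $m\ge 2$ both $-H(0)$ and $-y$ are likewise annihilated, so that, expanding $\frac{x-y}{e^{x-y}-1}=\sum_{r\ge0}\frac{B_r}{r!}(x-y)^r$ and using $\partial_x^a\partial_y^{m-a}(x-y)^r\big|_{0}=\delta_{r,m}(-1)^{m-a}m!$, one gets $(-1)^a\partial_x^a\partial_y^{m-a}\Psi\big|_{0}=(-1)^mB_m=B_m$, the odd case being covered by $B_m=0$.

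Finally I would account for the correction term and the low-degree cases. For $m\ge 2$ the summand $\delta_{a,m}k^m$ contributes $(-1)^m\sum_{k}(-1)^{k+1}\binom{n+1}{k}k^{m-1}$, an $(n+1)$-st finite difference of a polynomial of degree $m-1<n+1$, hence $0$; this proves (\ref{simple}) and shows it agrees with (\ref{general}) for $m\ge 2$. For $m=0$ and $m=1$ the previously dropped constant $-H(0)$ and term $-y$ reappear, and a short direct computation shows they are exactly cancelled, respectively corrected, by the $\delta_{a,m}k^m$ summand (which equals $H(0)$ when $m=0$ and $1$ when $m=a=1$), yielding $B_0=1$ and $B_1=-\tfrac12$. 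The main obstacle I anticipate is the second step: guessing the auxiliary function $\Psi$ and, above all, recognizing that the order-$(n+1)$ vanishing of $(1-e^{s})^{n+1}$ permits replacing $H(s)$ by its linear part $H(0)+s$. This is precisely where the hypothesis $m\le n$ is consumed, and it is what converts a complicated finite sum into the plain Bernoulli generating function $\frac{t}{e^{t}-1}$.
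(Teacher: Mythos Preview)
Your argument is correct and is essentially the paper's first proof (\S2) in a different wrapping: the paper's function $g(x,y)=\dfrac{1}{e^{y-x}-1}\displaystyle\int_x^y(e^t-1)^{n+1}\,dt$ and your $\Psi$ differ only by explicit elementary terms, your identity $H'(s)=1-(1-e^s)^{n+1}$ is exactly the paper's observation that $(e^t-1)^{n+1}$ has only terms of degree $\ge n+1$, and your geometric-series step is the paper's identity $\dfrac{e^{ky}-e^{kx}}{e^{y-x}-1}=\sum_{i=1}^{k-1}e^{ix}e^{(k-i)y}+e^{kx}$. The one organizational difference is that the paper establishes (\ref{general}) uniformly for all $m\ge 0$ and then deduces (\ref{simple}) via Lemma~1, whereas you prove (\ref{simple}) first for $m\ge 2$ and handle $m\in\{0,1\}$ separately; the paper's order avoids the small case analysis, but both routes invoke the same finite-difference identity (your ``$(n+1)$-st difference of a degree $m-1$ polynomial'' is precisely Lemma~1).
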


It is clear that, in the case $a=0$ and $n=m$, the formula \eqref{simple} reduces to the Kronecker formula \eqref{kronecker}.

We give two proofs of Theorem \ref{thm2}.
In \S 2, we introduce a two-variable function $g(x,y)$ and compute its series expansion in two different ways.
This leads to a proof of Theorem \ref{thm2}.
In \S 3, we construct a certain continuous map $\hat{\mu}\colon \mathbb{Q}[[Z]]\to \mathbb{Q}[[X,Y]]$ between the rings of formal power series.
A key observation is that $\hat{\mu}(Z)$ is expressed in terms of the Bernoulli numbers, and this leads to another proof of Theorem \ref{thm2}.

The map $\hat\mu$ is motivated by an operation $\mu$ to a curve on an oriented surface.
This operation was introduced in \cite{KK1} inspired by a construction of Turaev \cite{T1}, and, among other things, it computes self-intersections of curves.
In \S4 we first recall the operation $\mu$ from \cite{KK1}. Then we obtain an exact formula for $\mu$ (Theorem \ref{thmulog}) based on the results in \S3.
The Bernoulli numbers have already appeared in the tensorial description of the homotopy intersection form  on an oriented surface \cite{MT}.
Our formula provides yet another evidence for a close connection between topology of surfaces and Bernoulli numbers.

\section{The first proof}

Let $f(x,y)$ and $g(x,y)$ be functions in variables $x$ and $y$ defined by
\begin{equation*}\label{f(x,y)}
  f(x,y):=\int_{x}^{y}(e^t-1)^{n+1}dt, \quad \text{and} \quad
  g(x,y):=\frac{f(x,y)}{e^{y-x}-1}.
\end{equation*}
We will examine the coefficient of $x^ay^{m-a}$ in the series expansion of $g(x,y)$.

First we compute $f(x,y)$ as follows:
\begin{align*}
   f(x,y)&=\int_{x}^{y}(e^t-1)^{n+1}dt \\
   &=\int_{x}^{y}\sum_{k=0}^{n+1}(-1)^{n+1-k}\binom{n+1}{k}e^{kt}dt \\
   &=(-1)^{n+1}\sum_{k=1}^{n+1}\frac{(-1)^k}{k}
     \binom{n+1}{k}(e^{ky}-e^{kx})+(-1)^{n+1}(y-x).\\
\end{align*}
Since
\begin{equation*}
  \frac{e^{ky}-e^{kx}}{e^{y-x}-1}
  =\frac{e^{kx}(e^{k(y-x)}-1)}{e^{y-x}-1}
  =\sum_{i=1}^{k-1}e^{ix}e^{(k-i)y}+e^{kx},
\end{equation*}
we can compute $g(x,y)$ as follows: 
\begin{align*}
  g(x,y)=&\frac{f(x,y)}{e^{y-x}-1} \\
   =&(-1)^{n+1}\sum_{k=1}^{n+1}\frac{(-1)^k}{k}
     \binom{n+1}{k}\frac{(e^{ky}-e^{kx})}{e^{y-x}-1} 
     +(-1)^{n+1}\frac{y-x}{e^{y-x}-1}  \\
   =&(-1)^{n+1}\sum_{k=1}^{n+1}\frac{(-1)^k}{k}
     \binom{n+1}{k}\left[\sum_{i=1}^{k-1}e^{ix}e^{(k-i)y}+e^{kx}\right]\\
     &+(-1)^{n+1}\sum_{b=0}^{\infty}\frac{B_b}{b!}(y-x)^b. \\
\end{align*}
Then using the identities:
$$e^{ix}e^{(k-i)y}=\sum_{b,c=0}^{\infty} \frac{i^b(k-i)^c}{b!c!}x^by^c
\quad \text{and} \quad e^{kx}=\sum_{b=0}^{\infty}\frac{k^b}{b!}x^b,$$
we see that the coefficient of $x^ay^{m-a}$ in $g(x,y)$ is given by
\begin{align*}
&(-1)^{n+1}\sum_{k=1}^{n+1}\frac{(-1)^k}{k}
     \binom{n+1}{k}
     \left[\sum_{i=1}^{k-1}\frac{i^a}{a!}\frac{(k-i)^{m-a}}{(m-a)!}
     +\delta_{a,m}\frac{k^m}{m!}\right] \\
     &+(-1)^{n+1+a}\frac{B_m}{m!}
        \binom{m}{a}.\\
\end{align*}
This is equal to $((-1)^{n+1+a}/m!)\binom{m}{a}$ times
\begin{equation}
\label{coeff1}
(-1)^{a}\displaystyle\sum_{k=1}^{n+1}\frac{(-1)^k}{k}
\binom{n+1}{k}\left[\sum_{i=1}^{k-1}i^a(k-i)^{m-a}
+\delta_{a,m}k^m\right]+B_m.
\end{equation}

Secondly, we expand $g(x,y)$ in a different way.
Put $g_1(x,y)=f(x,y)/(y-x)$.
Then we have
\begin{equation*}
  g(x,y)=\frac{f(x,y)}{y-x}\frac{y-x}{e^{y-x}-1}=g_1(x,y) \sum_{b=0}^{\infty}\frac{B_b}{b!}(y-x)^b.
\end{equation*}
Writing $(e^t-1)^{n+1}=\sum_{i\ge n+1} a_it^i$, we have
$$f(x,y)=\int_x^y(e^t-1)^{n+1}dt=\sum_{i\ge n+1}\frac{a_i}{i+1}(y^{i+1}-x^{i+1}).$$
Thus the series expansion of $g_1(x,y)$ has all terms of degree $\ge n+1$, so does that of $g(x,y)$.
In particular, the coefficient of $x^ay^{m-a}$ in this expansion is zero.
Therefore, the expression (\ref{coeff1}) is zero, and we obtain the formula (\ref{general}).

Finally, we can derive the formula (\ref{simple}) in Theorem \ref{thm2} from the formula (\ref{general}) by applying the following lemma. 
Although it might be well known, we give its proof for the sake of completeness.
\begin{lem}
\label{lem1}
Let $m,n$ be integers satisfying $\ 0\leq m\leq n$.
Then it holds that
\begin{equation*}
  \sum_{k=1}^{n+1}(-1)^{k}\binom{n+1}{k}k^m =
     \begin{cases}
     0 & \text{if \ $m\geq 1$,} \\
      -1 & \text{if \ $m=0$}.
    \end{cases} 
\end{equation*}
\end{lem}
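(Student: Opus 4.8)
The plan is to exploit the generating function $(e^x-1)^{n+1}$, which is already the engine driving the computation in this section. First I would observe that since $e^x-1=x+O(x^2)$, the Taylor expansion of $(e^x-1)^{n+1}$ in the variable $x$ begins in degree $n+1$; consequently the coefficient of $x^m$ in this expansion vanishes whenever $0\le m\le n$. On the other hand, the binomial theorem gives
\[
  (e^x-1)^{n+1}=\sum_{k=0}^{n+1}(-1)^{n+1-k}\binom{n+1}{k}e^{kx},
\]
and reading off the coefficient of $x^m$ from the right-hand side (using $e^{kx}=\sum_{b\ge0}k^bx^b/b!$ together with $(-1)^{n+1-k}=(-1)^{n+1}(-1)^k$) yields
\[
  \frac{(-1)^{n+1}}{m!}\sum_{k=0}^{n+1}(-1)^{k}\binom{n+1}{k}k^m.
\]
Comparing the two computations, I would conclude that $\sum_{k=0}^{n+1}(-1)^{k}\binom{n+1}{k}k^m=0$ for every $m$ with $0\le m\le n$.

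It then remains only to strip off the $k=0$ term and distinguish the two cases. When $m\ge 1$ we have $0^m=0$, so the sum over $k=1,\dots,n+1$ coincides with the full sum from $k=0$ and is therefore $0$. When $m=0$ we instead have $0^0=1$, so the sum over $k=1,\dots,n+1$ equals $0-1=-1$. These are exactly the two values asserted in the lemma.

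I do not expect any genuine obstacle here: the displayed vanishing is a classical finite-difference fact (equivalently, it records that the Stirling number of the second kind $S(m,n+1)$ is zero for $m<n+1$), and deriving it from $(e^x-1)^{n+1}$ keeps the argument in harmony with the rest of the section. The only point demanding a little care is the convention $0^0=1$ in the borderline case $m=0$, which is precisely what replaces the value $0$ by $-1$.
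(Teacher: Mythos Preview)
Your argument is correct and is essentially the paper's own proof: both expand $(e^x-1)^{n+1}$ via the binomial theorem, observe that its Taylor series has no terms of degree $\le n$, and equate the coefficient of $x^m$ to zero. The only cosmetic difference is that the paper separates the $k=0$ contribution before reading off the coefficient (writing the constant term as an explicit ``$+1$''), whereas you keep the full sum from $k=0$ and peel off that term afterwards using the convention $0^0=1$.
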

\begin{proof}
Set $f(x):=(e^x-1)^{n+1}$.
Since $m\le n$, the coefficient of $x^m$ in the series expansion of $f(x)$ is zero.

On the other hand, we compute
\begin{align*}
  f(x)&=\sum_{k=0}^{n+1}(-1)^{n+1-k}\binom{n+1}{k}e^{kx} \\
  &=(-1)^{n+1}\left[\sum_{k=1}^{n+1}(-1)^{k}\binom{n+1}{k}e^{kx}
    +1\right]  \\
  &=(-1)^{n+1}\left[\sum_{k=1}^{n+1}(-1)^{k}\binom{n+1}{k}\sum_{a=0}^{\infty}
    \frac{k^a}{a!}x^a+1\right] . \\
\end{align*}
Since the coefficient of $x^m$ in the last expression is equal to
$$     \begin{cases}
     \displaystyle\frac{ (-1)^{n+1}}{m!}\sum_{k=1}^{n+1}(-1)^{k}\binom{n+1}{k}
       k^m & \text{if $m\geq 1$,} \\
      \displaystyle(-1)^{n+1}\left[\sum_{k=1}^{n+1}(-1)^{k}\binom{n+1}{k}+1\right]
       & \text{if \ $m=0$},
    \end{cases} 
$$
the assertion follows.
\end{proof}

This completes the proof of Theorem \ref{thm2}.

\section{The second proof}
First of all, we describe a preliminary construction.

Let $\mathbb{Q}[[Z]]$ (resp. $\mathbb{Q}[[X,Y]]$) be the ring of formal power series in an indeterminate $Z$ (resp. in indeterminates $X$ and $Y$).
For a non-negative integer $p$, let $F_p^Z$ (resp. $F_p^{X,Y}$) be the set of formal power series in $\mathbb{Q}[[Z]]$ (resp. $\mathbb{Q}[[X,Y]]$) which has only terms of (total) degree $\ge p$.
We have natural isomorphisms $\mathbb{Q}[[Z]] \cong \varprojlim_p \mathbb{Q}[[Z]]/F_p^Z$ and $\mathbb{Q}[[X,Y]] \cong \varprojlim_p \mathbb{Q}[[X,Y]]/F_p^{X,Y}$.

Set $z:=e^Z=\sum_{i=0}^{\infty}(1/i!)Z^i$.
Then the Laurent polynomial ring $\mathbb{Q}[z,z^{-1}]$ is a subring of $\mathbb{Q}[[Z]]$.
The augmentation ideal $I$ is defined by
$$I={\rm Ker}(\mathbb{Q}[z,z^{-1}] \to \mathbb{Q}, \sum_j a_jz^j \mapsto \sum_j a_j).$$
Then $I$ gives a filtration $\{ I^p \}_p$ of $\mathbb{Q}[z,z^{-1}]$.
By the inclusion map $\mathbb{Q}[z,z^{-1}]\hookrightarrow \mathbb{Q}[[Z]]$, the filtration $\{ F_p^Z \}_p$ restricts to $\{ I^p \}_p$.
Moreover, we have a natural isomorphism $\mathbb{Q}[[Z]] \cong \varprojlim_p \mathbb{Q}[z,z^{-1}]/I^p$.

Define a $\mathbb{Q}$-linear map $\hat{\mu}\colon \mathbb{Q}[z,z^{-1}] \to \mathbb{Q}[[X,Y]]$ by
\begin{equation}
\label{eq:dfn-mu}
\hat{\mu}(z^k)=\begin{cases}
-\sum_{i=1}^k e^{iX}e^{(k-i)Y} & (k>0) \\
0 & (k=0) \\
\sum_{i=0}^{|k|-1}e^{-iX}e^{(k+i)Y} & (k<0).
\end{cases}
\end{equation}
From the definition of $\hat{\mu}$ it is easy to see that
$$(e^{-X}e^Y-1)\hat{\mu}(z^k)=e^{kX}-e^{kY}, \quad k\in \mathbb{Z}.$$
Therefore, we have
\begin{equation}
\label{eq:fundamental}
(e^{-X}e^Y-1)\hat{\mu}(f(z))=f(e^X)-f(e^Y)
\end{equation}
for any Laurent polynomial $f(z)\in \mathbb{Q}[z,z^{-1}]$.
Consider
$$\Phi(X,Y):=\sum_{i=0}^{\infty} \frac{B_i}{i!}(-X+Y)^i.$$
Then we have $(e^{-X}e^Y-1)\Phi(X,Y)=-X+Y$.
Multiplying $\Phi(X,Y)$ to the both sides of (\ref{eq:fundamental}), we have
\begin{equation}
\label{eq:fundamental2}
(-X+Y)\hat{\mu}(f(z))=(f(e^X)-f(e^Y))\Phi(X,Y)
\end{equation}
for any $f(z)\in \mathbb{Q}[z,z^{-1}]$.

\begin{lem}
There is a unique continuous extension $\hat{\mu}\colon \mathbb{Q}[[Z]]\to \mathbb{Q}[[X,Y]]$ of the map $\hat{\mu}$ in {\rm (\ref{eq:dfn-mu})}.
\end{lem}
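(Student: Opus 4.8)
The plan is to realize the extension as the map induced on inverse limits, so that the assertion becomes the standard fact that a uniformly continuous linear map of filtered vector spaces extends uniquely to the completions. Since $\mathbb{Q}[[Z]]\cong\varprojlim_p\mathbb{Q}[z,z^{-1}]/I^p$ and $\mathbb{Q}[[X,Y]]\cong\varprojlim_p\mathbb{Q}[[X,Y]]/F_p^{X,Y}$, with $\mathbb{Q}[[X,Y]]$ complete and Hausdorff, it suffices to establish the single filtration estimate
\[
\hat{\mu}(I^p)\subseteq F_{p-1}^{X,Y}\quad\text{for every }p\ge 1.
\]
Granting this, $\hat{\mu}$ carries $I^{q+1}$ into $F_q^{X,Y}$, hence descends to $\mathbb{Q}$-linear maps $\mathbb{Q}[z,z^{-1}]/I^{q+1}\to\mathbb{Q}[[X,Y]]/F_q^{X,Y}$; these are compatible with the transition maps because they all arise from the one map $\hat\mu$, so passing to the inverse limit over $q$ produces a continuous map $\mathbb{Q}[[Z]]\to\mathbb{Q}[[X,Y]]$ that restricts to $\hat\mu$ on $\mathbb{Q}[z,z^{-1}]$.

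The heart of the matter, and the step I expect to be the main obstacle, is the estimate $\hat\mu(I^p)\subseteq F_{p-1}^{X,Y}$, which I would derive from the fundamental relation (\ref{eq:fundamental2}). Take $f(z)\in I^p$. Because the filtration $\{F_p^Z\}_p$ restricts to $\{I^p\}_p$, the image of $f(z)$ in $\mathbb{Q}[[Z]]$ has $Z$-degree $\ge p$; writing $f(e^Z)=\sum_{i\ge p}c_iZ^i$ then gives $f(e^X)-f(e^Y)=\sum_{i\ge p}c_i(X^i-Y^i)$, a series of total degree $\ge p$. Since $\Phi(X,Y)$ lies in $F_0^{X,Y}$, the right-hand side of (\ref{eq:fundamental2}) lies in $F_p^{X,Y}$, so $(-X+Y)\hat\mu(f(z))\in F_p^{X,Y}$. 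It then remains to see that dividing by the degree-one element $-X+Y$ lowers the filtration bound by exactly one: if $(-X+Y)g\in F_p^{X,Y}$ for some $g\in\mathbb{Q}[[X,Y]]$, then, as $\mathbb{Q}[[X,Y]]$ is an integral domain and $-X+Y$ is homogeneous of degree one, the lowest nonzero homogeneous part of $(-X+Y)g$ has degree one more than that of $g$, whence $g\in F_{p-1}^{X,Y}$. Applying this to $g=\hat\mu(f(z))$ yields the claim. A small example such as $f=z-1$, for which $\hat\mu(z-1)=-e^X$ lies in $F_0^{X,Y}$ but not in $F_1^{X,Y}$, shows that the bound $p-1$ is sharp and cannot be improved to $p$.

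Finally, uniqueness is formal: $\mathbb{Q}[z,z^{-1}]$ is dense in $\mathbb{Q}[[Z]]$ for the $I$-adic topology and $\mathbb{Q}[[X,Y]]$ is Hausdorff, so any two continuous extensions that agree on $\mathbb{Q}[z,z^{-1}]$ coincide on all of $\mathbb{Q}[[Z]]$. As an aside, the same estimate permits a closed-form construction of existence: setting $\hat\mu(w)=\bigl(\epsilon_X(w)-\epsilon_Y(w)\bigr)\Phi(X,Y)/(-X+Y)$, where $\epsilon_X,\epsilon_Y\colon\mathbb{Q}[[Z]]\to\mathbb{Q}[[X,Y]]$ are the continuous substitutions $Z\mapsto X$ and $Z\mapsto Y$, gives a well-defined continuous map (the numerator is always divisible by $-X+Y$) that restricts to the original $\hat\mu$ by (\ref{eq:fundamental2}).
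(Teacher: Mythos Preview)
Your proof is correct and follows essentially the same route as the paper: both reduce the lemma to the filtration estimate $\hat\mu(I^p)\subset F_{p-1}^{X,Y}$ and derive this from the identity~(\ref{eq:fundamental2}) by observing that $f(e^X)-f(e^Y)\in F_p^{X,Y}$ and then dividing out the degree-one factor $-X+Y$. You have simply spelled out more of the surrounding details (the inverse-limit formalism, the division-by-$(-X+Y)$ argument, the sharpness example, and the explicit closed form), which the paper leaves implicit.
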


\begin{proof}
It is sufficient to prove that $\hat{\mu}(I^p)\subset F_{p-1}^{X,Y}$ for any $p\ge 1$.
Suppose $f(z)\in I^p$.
Then $f(e^X)$ and $f(e^Y)$ lie in $F_p^{X,Y}$.
This means that the right hand side of (\ref{eq:fundamental2}) is an element of $F_p^{X,Y}$.
Therefore, $\hat{\mu}(f(z))\in F_{p-1}^{X,Y}$.
\end{proof}

Now for each $k\ge 1$ we can put $f(z)=(\log z)^k=Z^k$ in (\ref{eq:fundamental2}), and we obtain
$$(-X+Y)\hat{\mu}(Z^k)=(X^k-Y^k)\Phi(X,Y).$$
This shows that $\hat{\mu}(Z^k)\in F_{k-1}^{X,Y}$.
Setting $k=1$, we have
\begin{equation}
\hat{\mu}(Z) =-\Phi(X,Y)
=-\sum_{i=0}^{\infty}\frac{B_i}{i!}
\sum_{j=0}^i (-1)^j \binom{i}{j} X^j Y^{i-j}.
\label{eq:mu(Z)}
\end{equation}

\begin{proof}[The second proof of Theorem \ref{thm2}]
We will give another proof to the formula (\ref{general}) alone.
In what follows, $\equiv$ means an equality in $\mathbb{Q}[[X,Y]]$ modulo $F_{n+1}^{X,Y}$.
For $k=1,\ldots,n+1$, we have
\begin{equation}
\label{eq:mu(z^k)}
\hat{\mu}(z^k)=\hat{\mu}(e^{kZ})=\sum_{i=1}^{\infty} \frac{k^i}{i!}\hat{\mu}(Z^i)
\equiv \sum_{i=1}^{n+1} \frac{k^i}{i!}\hat{\mu}(Z^i).
\end{equation}

Consider the square matrix $D=(D_{ki})_{k,i}$ of order $n+1$, where $D_{ki}=k^i/i!$.
Then $D$ is invertible, and the inverse matrix of $D$ has the first row $(a_1,\ldots,a_{n+1})$, where
$$a_k=\frac{(-1)^{k+1}}{k}\binom{n+1}{k}$$
(see also Lemma \ref{lem1}).
From (\ref{eq:mu(z^k)}) we have
\begin{equation}
\label{eq:Zz}
\hat{\mu}(Z)\equiv \sum_{k=1}^{n+1} a_k \hat{\mu}(z^k)= \sum_{k=1}^{n+1}\frac{(-1)^{k+1}}{k} \binom{n+1}{k} \hat{\mu}(z^k).
\end{equation}
Furthermore, for $k=1,\ldots,n+1$, from (\ref{eq:dfn-mu}) we have
\begin{equation}
\label{eq:Zz2}
\hat{\mu}(z^k)=-\sum_{i=1}^{k-1}\sum_{a,b=0}^{\infty}\frac{i^a(k-i)^b}{a!b!} X^a Y^b
-\sum_{a=0}^{\infty} \frac{k^a}{a!}X^a.
\end{equation}
By (\ref{eq:Zz}) and (\ref{eq:Zz2}), the coefficient of $X^aY^{m-a}$ in $\hat{\mu}(Z)$ is
$$\sum_{k=1}^{n+1}\frac{(-1)^k}{k}\binom{n+1}{k}
\left[ \sum_{i=1}^{k-1} \frac{i^a(k-i)^{m-a}}{a!(m-a)!}+\delta_{m,a}\frac{k^m}{m!} \right].$$
On the other hand, by (\ref{eq:mu(Z)}), this coincides with
$$(-1)^{a+1}\frac{B_m}{m!}\binom{m}{a}=\frac{(-1)^{a+1}}{a!(m-a)!}B_m.$$
This completes the proof. 
\end{proof}

\section{A topological background for the second proof}

Let $S$ be a compact connected oriented surface with $\partial S\neq \emptyset$.
Fix a basepoint $*\in \partial S$ and set $\pi_1(S):=\pi_1(S,*)$.
We denote by $\hat{\pi}(S)$ the set of free homotopy classes of oriented loops on $S$.
For any $p\in S$, we denote by $|\ |\colon \pi_1(S,p)\to \hat{\pi}(S)$ the forgetful map of the basepoint.

We recall the operation $\mu\colon \mathbb{Q}\pi_1(S) \to \mathbb{Q}\pi_1(S)\otimes (\mathbb{Q}\hat{\pi}(S)/\mathbb{Q}{\bf 1})$, which has been introduced in \cite{KK1} inspired by a construction of Turaev \cite{T1}.
Here, ${\bf 1}$ is the class of a constant loop.
Let $\gamma\colon [0,1]\to S$ be an immersed based loop.
We arrange so that the pair of tangent vectors $(\dot{\gamma}(0),\dot{\gamma}(1))$ is a positive basis of the tangent space $T_*S$, and that the self-intersections of $\gamma$ (except for the base point $*$) lie in the interior ${\rm Int}(S)$ and consist of transverse double points.
Let $\Gamma$ be the set of double points of $\gamma$.
For $p\in \Gamma$ we denote $\gamma^{-1}(p)=\{ t_1^p,t_2^p\}$, so that $0<t_1^p<t_2^p<1$.
We define
$$\mu(\gamma):=-\sum_{p\in \Gamma} \varepsilon(\dot{\gamma}(t_1^p),\dot{\gamma}(t_2^p))
(\gamma_{0t_1^p}\gamma_{t_2^p1})\otimes |\gamma_{t_1^p t_2^p}|
\in \mathbb{Q}\pi_1(S)\otimes (\mathbb{Q}\hat{\pi}(S)/\mathbb{Q}{\bf 1}).$$
Here,
\begin{itemize}
\item the sign $\varepsilon(\dot{\gamma}(t_1^p),\dot{\gamma}(t_2^p))$ is $+1$ if the pair $(\dot{\gamma}(t_1^p),\dot{\gamma}(t_2^p))$ is a positive basis of $T_pS$, and is $-1$ otherwise,
\item the based loop $\gamma_{0t_1^p}\gamma_{t_2^p1}$ is the conjunction of the paths $\gamma|_{[0,t_1^p]}$ and $\gamma|_{[t_2^p,1]}$,
\item the element $\gamma_{t_1^pt_2^p}\in \pi_1(S,p)$ is the restriction of $\gamma$ to $[t_1^p,t_2^p]$ and we understand that $|\gamma_{t_1^p t_2^p}|=0$ if the loop $\gamma_{t_1^p t_2^p}$ is homotopic to a constant loop.
\end{itemize}
We remark that the alternating part of $(|\ |\otimes 1)\mu(\gamma)$ is exactly the Turaev cobracket \cite{T2} of the free loop $|\gamma|$.

We observe that if $\gamma$ is simple under the condition that the pair $(\dot{\gamma}(0),\dot{\gamma}(1))$ is a positive basis of $T_*S$, then for any integer $k\in \mathbb{Z}$,
\begin{equation}
\label{eq:mu(gamma^k)}
\mu(\gamma^k)=
\begin{cases}
-\sum_{i=1}^{k-1} \gamma^i \otimes |\gamma^{k-i}| & (k>0) \\
0 & (k=0) \\
\sum_{i=0}^{|k|-1} \gamma^{-i} \otimes |\gamma^{k+i}| & (k<0). \\
\end{cases}
\end{equation}
See Figure 1.
The definition of $\hat{\mu}$ in (\ref{eq:dfn-mu}) is motivated by this formula.
\begin{figure}
\caption{computation of $\mu(\gamma^k)$ for simple $\gamma$ ($k=4$)}
\unitlength 0.1in
\begin{picture}( 16.8000, 16.3000)( 19.5000,-28.3000)
%
{\color[named]{Black}{%
\special{pn 8}%
\special{ar 2630 2640 160 160  1.5707963 3.1415927}%
}}%
%
{\color[named]{Black}{%
\special{pn 8}%
\special{pa 2470 2640}%
\special{pa 2470 1840}%
\special{fp}%
}}%
%
{\color[named]{Black}{%
\special{pn 8}%
\special{ar 2630 1840 160 160  3.1415927 4.7123890}%
}}%
%
{\color[named]{Black}{%
\special{pn 8}%
\special{pa 2630 1680}%
\special{pa 2950 1680}%
\special{fp}%
}}%
%
{\color[named]{Black}{%
\special{pn 8}%
\special{ar 2950 1840 160 160  4.7123890 6.2831853}%
}}%
%
{\color[named]{Black}{%
\special{pn 8}%
\special{ar 2950 2160 160 160  6.2831853 6.2831853}%
\special{ar 2950 2160 160 160  0.0000000 1.5707963}%
}}%
%
{\color[named]{Black}{%
\special{pn 8}%
\special{pa 3110 2160}%
\special{pa 3110 1840}%
\special{fp}%
}}%
%
{\color[named]{Black}{%
\special{pn 8}%
\special{pa 2950 2320}%
\special{pa 2470 2320}%
\special{fp}%
}}%
%
{\color[named]{Black}{%
\special{pn 8}%
\special{ar 2470 2160 160 160  1.5707963 3.1415927}%
}}%
%
{\color[named]{Black}{%
\special{pn 8}%
\special{pa 2310 2160}%
\special{pa 2310 1840}%
\special{fp}%
}}%
%
{\color[named]{Black}{%
\special{pn 8}%
\special{ar 2630 1840 320 320  3.1415927 4.7123890}%
}}%
%
{\color[named]{Black}{%
\special{pn 8}%
\special{pa 2630 1520}%
\special{pa 2950 1520}%
\special{fp}%
}}%
%
{\color[named]{Black}{%
\special{pn 8}%
\special{ar 2950 1840 320 320  4.7123890 6.2831853}%
}}%
%
{\color[named]{Black}{%
\special{pn 8}%
\special{pa 3270 1840}%
\special{pa 3270 2160}%
\special{fp}%
}}%
%
{\color[named]{Black}{%
\special{pn 8}%
\special{ar 2950 2160 320 320  6.2831853 6.2831853}%
\special{ar 2950 2160 320 320  0.0000000 1.5707963}%
}}%
%
{\color[named]{Black}{%
\special{pn 8}%
\special{pa 2950 2480}%
\special{pa 2470 2480}%
\special{fp}%
}}%
%
{\color[named]{Black}{%
\special{pn 8}%
\special{ar 2470 2160 320 320  1.5707963 3.1415927}%
}}%
%
{\color[named]{Black}{%
\special{pn 8}%
\special{pa 2150 2160}%
\special{pa 2150 1840}%
\special{fp}%
}}%
%
{\color[named]{Black}{%
\special{pn 8}%
\special{ar 2630 1840 480 480  3.1415927 4.7123890}%
}}%
%
{\color[named]{Black}{%
\special{pn 8}%
\special{pa 2630 1360}%
\special{pa 2950 1360}%
\special{fp}%
}}%
%
{\color[named]{Black}{%
\special{pn 8}%
\special{ar 2950 1840 480 480  4.7123890 6.2831853}%
}}%
%
{\color[named]{Black}{%
\special{pn 8}%
\special{pa 3430 1840}%
\special{pa 3430 2160}%
\special{fp}%
}}%
%
{\color[named]{Black}{%
\special{pn 8}%
\special{ar 2950 2160 480 480  6.2831853 6.2831853}%
\special{ar 2950 2160 480 480  0.0000000 1.5707963}%
}}%
%
{\color[named]{Black}{%
\special{pn 8}%
\special{pa 2950 2640}%
\special{pa 2470 2640}%
\special{fp}%
}}%
%
{\color[named]{Black}{%
\special{pn 8}%
\special{ar 2470 2160 480 480  1.5707963 3.1415927}%
}}%
%
{\color[named]{Black}{%
\special{pn 8}%
\special{pa 1990 2160}%
\special{pa 1990 1840}%
\special{fp}%
}}%
%
{\color[named]{Black}{%
\special{pn 8}%
\special{ar 2630 1840 640 640  3.1415927 4.7123890}%
}}%
%
{\color[named]{Black}{%
\special{pn 8}%
\special{pa 2630 1200}%
\special{pa 2950 1200}%
\special{fp}%
}}%
%
{\color[named]{Black}{%
\special{pn 8}%
\special{ar 2950 1840 640 640  4.7123890 6.2831853}%
}}%
%
{\color[named]{Black}{%
\special{pn 8}%
\special{pa 3590 1840}%
\special{pa 3590 2160}%
\special{fp}%
}}%
%
{\color[named]{Black}{%
\special{pn 8}%
\special{ar 2630 2160 960 640  6.2831853 6.2831853}%
\special{ar 2630 2160 960 640  0.0000000 1.5707963}%
}}%
%
{\color[named]{Black}{%
\special{pn 13}%
\special{pa 1990 2800}%
\special{pa 3590 2800}%
\special{fp}%
}}%
%
{\color[named]{Black}{%
\special{pn 4}%
\special{sh 1}%
\special{ar 2630 2800 26 26 0  6.28318530717959E+0000}%
}}%
%
{\color[named]{Black}{%
\special{pn 13}%
\special{ar 2790 2000 160 80  3.1415927 6.2831853}%
}}%
%
{\color[named]{Black}{%
\special{pn 13}%
\special{ar 2790 2000 200 72  6.2831853 6.2831853}%
\special{ar 2790 2000 200 72  0.0000000 3.1415927}%
}}%
%
{\color[named]{Black}{%
\special{pn 4}%
\special{sh 1}%
\special{ar 2470 2640 26 26 0  6.28318530717959E+0000}%
}}%
%
{\color[named]{Black}{%
\special{pn 4}%
\special{sh 1}%
\special{ar 2470 2480 26 26 0  6.28318530717959E+0000}%
}}%
%
{\color[named]{Black}{%
\special{pn 4}%
\special{sh 1}%
\special{ar 2470 2320 26 26 0  6.28318530717959E+0000}%
}}%
%
{\color[named]{Black}{%
\special{pn 8}%
\special{pa 2470 2000}%
\special{pa 2510 2080}%
\special{fp}%
\special{pa 2470 2000}%
\special{pa 2430 2080}%
\special{fp}%
}}%
%
{\color[named]{Black}{%
\special{pn 8}%
\special{pa 2310 2000}%
\special{pa 2350 2080}%
\special{fp}%
\special{pa 2310 2000}%
\special{pa 2270 2080}%
\special{fp}%
}}%
%
{\color[named]{Black}{%
\special{pn 8}%
\special{pa 2150 2000}%
\special{pa 2190 2080}%
\special{fp}%
\special{pa 2150 2000}%
\special{pa 2110 2080}%
\special{fp}%
}}%
%
{\color[named]{Black}{%
\special{pn 8}%
\special{pa 1990 2000}%
\special{pa 2030 2080}%
\special{fp}%
\special{pa 1990 2000}%
\special{pa 1950 2080}%
\special{fp}%
}}%
%
{\color[named]{Black}{%
\special{pn 8}%
\special{pa 3110 2000}%
\special{pa 3150 1920}%
\special{fp}%
\special{pa 3110 2000}%
\special{pa 3070 1920}%
\special{fp}%
}}%
%
{\color[named]{Black}{%
\special{pn 8}%
\special{pa 3270 2000}%
\special{pa 3310 1920}%
\special{fp}%
\special{pa 3270 2000}%
\special{pa 3230 1920}%
\special{fp}%
}}%
%
{\color[named]{Black}{%
\special{pn 8}%
\special{pa 3430 2000}%
\special{pa 3470 1920}%
\special{fp}%
\special{pa 3430 2000}%
\special{pa 3390 1920}%
\special{fp}%
}}%
%
{\color[named]{Black}{%
\special{pn 8}%
\special{pa 3590 2000}%
\special{pa 3630 1920}%
\special{fp}%
\special{pa 3590 2000}%
\special{pa 3550 1920}%
\special{fp}%
}}%
\put(26.3000,-29.6000){\makebox(0,0)[lb]{$*$}}%
\end{picture}%
\end{figure}
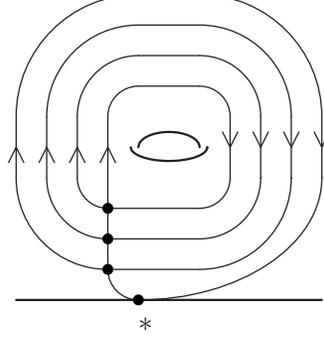

In \cite{KK1}, it was shown that the map $\mu$ extends to a map between completions $\mu\colon \widehat{\mathbb{Q}\pi_1(S)}\to \widehat{\mathbb{Q}\pi_1(S)} \widehat{\otimes} \widehat{\mathbb{Q}\hat{\pi}(S)}$.
Here $\widehat{\mathbb{Q}\pi_1(S)}$ and $\widehat{\mathbb{Q}\hat\pi(S)}$ are the completions of the group ring $\mathbb{Q}\pi_1(S)$ and the Goldman-Turaev Lie bialgebra $\mathbb{Q}\hat\pi(S)/\mathbb{Q}\mathbf{1}$, respectively, with respect to the augmentation ideal of $\mathbb{Q}\pi_1(S)$.
See \cite{KKp}.
Then we can consider $\log \gamma=\sum_{i=1}^{\infty} ((-1)^{i+1}/i) (\gamma-1)^i\in \widehat{\mathbb{Q}\pi_1(S)}$.

\begin{thm}
\label{thmulog}
Let $\gamma \in \pi$ be represented by a simple loop, and assume the pair $(\dot{\gamma}(0), \dot{\gamma}(1))$ is a positive basis of the tangent space $T_*S$.
Then we have 
$$\mu(\log \gamma) = \dfrac{1}{2}1\widehat{\otimes}\vert\log \gamma\vert
-\sum^\infty_{k=1}\dfrac{B_{2k}}{(2k)!}\sum^{2k-1}_{p=0}
 (-1)^p\binom{2k}{p}(\log \gamma)^p\widehat{\otimes} 
\vert (\log \gamma)^{2k-p}\vert.$$
\end{thm}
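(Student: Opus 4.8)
The plan is to transport the algebraic identity $\hat\mu(Z)=-\Phi(X,Y)$ of \eqref{eq:mu(Z)} into the topological setting by means of substitution homomorphisms, exploiting the fact that the formula \eqref{eq:mu(gamma^k)} for $\mu(\gamma^k)$ is essentially the specialization of the formula \eqref{eq:dfn-mu} for $\hat\mu(z^k)$. Concretely, I would first introduce a continuous ring homomorphism $\iota\colon \mathbb{Q}[[Z]]\to\widehat{\mathbb{Q}\pi_1(S)}$ determined by $z=e^Z\mapsto\gamma$; since $\gamma-1$ lies in the augmentation ideal this is filtration-preserving and well defined, and it sends $Z=\log z\mapsto\log\gamma$. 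On the target side I would introduce $\Psi=\alpha\,\widehat{\otimes}\,\beta\colon \mathbb{Q}[[X,Y]]\to\widehat{\mathbb{Q}\pi_1(S)}\,\widehat{\otimes}\,\widehat{\mathbb{Q}\hat\pi(S)}$, where $\alpha\colon\mathbb{Q}[[X]]\to\widehat{\mathbb{Q}\pi_1(S)}$ is given by $X\mapsto\log\gamma$ and $\beta\colon\mathbb{Q}[[Y]]\to\widehat{\mathbb{Q}\hat\pi(S)}$ by $Y^b\mapsto|(\log\gamma)^b|$; equivalently $\alpha(e^{iX})=\gamma^i$ and $\beta(e^{jY})=|\gamma^j|$, so that $\Psi(X^aY^b)=(\log\gamma)^a\,\widehat{\otimes}\,|(\log\gamma)^b|$.

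The heart of the argument is the commutativity of the square formed by $\hat\mu$, $\mu$, $\iota$ and $\Psi$. I would verify this on the Laurent monomials $z^k$ ($k\in\mathbb{Z}$), which span the dense subring $\mathbb{Q}[z,z^{-1}]\subset\mathbb{Q}[[Z]]$. For $k>0$, applying $\Psi$ to \eqref{eq:dfn-mu} gives $-\sum_{i=1}^{k}\gamma^i\,\widehat{\otimes}\,|\gamma^{k-i}|$; the top term $i=k$ equals $\gamma^k\,\widehat{\otimes}\,|1|=0$ because $|1|$ is the class of the constant loop, which is zero in $\mathbb{Q}\hat\pi(S)/\mathbb{Q}\mathbf{1}$, so this reduces exactly to $\mu(\gamma^k)$ as given in \eqref{eq:mu(gamma^k)}; the cases $k=0$ and $k<0$ match directly. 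Thus $\Psi\circ\hat\mu$ and $\mu\circ\iota$ agree on every $z^k$. Since all four maps are continuous (for $\hat\mu$ this is the preceding Lemma, for $\mu$ its extension to completions from \cite{KK1}, and for $\iota,\Psi$ by construction) and $\mathbb{Q}[z,z^{-1}]$ is dense in $\mathbb{Q}[[Z]]$, the identity $\mu\circ\iota=\Psi\circ\hat\mu$ holds on all of $\mathbb{Q}[[Z]]$.

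Evaluating the commutative square at $Z$ then yields $\mu(\log\gamma)=\mu(\iota(Z))=\Psi(\hat\mu(Z))=\Psi(-\Phi(X,Y))$. Substituting the explicit expansion \eqref{eq:mu(Z)} and applying $\Psi$ termwise gives $\mu(\log\gamma)=-\sum_{i=0}^{\infty}\tfrac{B_i}{i!}\sum_{j=0}^{i}(-1)^j\binom{i}{j}(\log\gamma)^j\,\widehat{\otimes}\,|(\log\gamma)^{i-j}|$. In this double sum the terms with $j=i$ vanish since $|(\log\gamma)^0|=|1|=0$, so $j$ may be restricted to $0\le j\le i-1$; the contribution of $i=0$ is then empty, that of $i=1$ is $-B_1\,1\,\widehat{\otimes}\,|\log\gamma|=\tfrac12\,1\,\widehat{\otimes}\,|\log\gamma|$, and all odd $i\ge 3$ drop out because $B_i=0$. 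Collecting the even indices $i=2k$ produces exactly the stated formula.

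I expect the main obstacle to be the careful bookkeeping in setting up $\iota$ and $\Psi$ as continuous maps between the correct completions and in verifying that the square commutes—in particular, tracking that the single discrepancy between \eqref{eq:dfn-mu} and \eqref{eq:mu(gamma^k)} (the range $i=1,\dots,k$ versus $i=1,\dots,k-1$ for $k>0$) is precisely absorbed by the relation $|1|=0$. Everything after the commutative square is a routine expansion, so the topological content is concentrated entirely in identifying $\mu$ on powers of $\gamma$ with $\hat\mu$ on powers of $z$, together with the continuity needed to pass from the $z^k$ to $\log\gamma$.
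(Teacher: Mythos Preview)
Your proposal is correct and follows essentially the same approach as the paper. The paper's proof is terser: it identifies $\mathbb{Q}[[X,Y]]$ with $\mathbb{Q}[[Z]]\,\widehat{\otimes}\,\mathbb{Q}[[Z]]$, rewrites \eqref{eq:mu(Z)} accordingly, and then simply invokes \eqref{eq:mu(gamma^k)} together with $|1|=0$; your commutative square $\mu\circ\iota=\Psi\circ\hat\mu$, verified on the dense set $\{z^k\}$ and extended by continuity, is exactly the content behind that one sentence, made explicit.
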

\begin{proof}
We identify the ring $\mathbb{Q}[[X, Y]]$ with the complete  tensor product $\mathbb{Q}[[Z]]\widehat{\otimes}\mathbb{Q}[[Z]]$ by the map $X \mapsto Z\widehat{\otimes} 1$ and $Y \mapsto 1\widehat{\otimes} Z$.
Then the computation (\ref{eq:mu(Z)}) implies 
\begin{align}
\hat\mu(\log z) 
=& -1\widehat{\otimes} 1 -\dfrac12(\log z)\widehat{\otimes} 1 + \dfrac121\widehat{\otimes}(\log z) \nonumber\\
&- \sum^\infty_{k=1}\dfrac{B_{2k}}{(2k)!}\sum^{2k}_{p=0}
(-1)^p\binom{2k}{p} (\log z)^p\widehat{\otimes} (\log z)^{2k-p}.
\label{mulog}
\end{align}
Since the curve $\gamma$ satisfies the formula (\ref{eq:mu(gamma^k)}) and we agree that $\vert 1\vert = 0$, the theorem follows from (\ref{mulog}). 
\end{proof}

As an application of Theorem \ref{thmulog}, the second-named author gives an explicit tensorial description of the Turaev cobracket on any genus $0$ compact surface with respect to the standard group-like expansion \cite{Kpre}.
It seems to suggest a certain connection between the operation $\mu$, or equivalently, the Turaev cobracket, and the Kashiwara-Vergne problem in the formulation by Alekseev-Torossian \cite{AT}.

\end{document}